\newtheorem{theorem}{Theorem}
\newtheorem{prop}[theorem]{Proposition}
\newtheorem{cor}[theorem]{Corollary}
\theoremstyle{definition}
\DeclareMathOperator{\Ort}{O}
\DeclareMathOperator{\Spin}{Spin}
\DeclareMathOperator{\SO}{SO}
\DeclareMathOperator{\PO}{PO}
\DeclareMathOperator{\PSO}{PSO}
\DeclareMathOperator{\Isom}{Isom}
\newcommand{\Z}{\mathbb Z}
\newcommand{\Q}{\mathbb Q}
\newcommand{\R}{\mathbb R}
\newcommand{\Hy}{\mathbf H}
\newcommand{\II}{\mathrm{II}}
\newcommand{\I}{\mathrm{I}}
\newcommand{\Lat}{\mathcal{L}}
\newcommand{\mass}{\mathrm{mass}}
\newcommand{\vol}{\mathrm{vol}_\Hy}
\newcommand{\G}{\mathrm G}
\newcommand{\bG}{\overline{\G}}
\newcommand{\Buil}{\mathcal B}
\newcommand{\bs}{\backslash}
\begin{document}
\title{Even unimodular Lorentzian lattices and hyperbolic volume}

\author{Vincent Emery}

\begin{abstract}
We compute the hyperbolic covolume of the automorphism
group of each even unimodular Lorentzian lattice. The result is obtained as a
consequence of a previous work with Belolipetsky, which uses Prasad's
volume to compute the volumes of the smallest hyperbolic arithmetic
orbifolds.
\end{abstract}

\address{
Max Planck Institute for Mathematics\\
Vivatsgasse 7\\
53111 Bonn\\
Germany
}
\email{vincent.emery@gmail.com}

\date{\today}


\maketitle

\section{Introduction}

Let $\Hy^n$ be the hyperbolic $n$-space, of constant curvature $-1$. We
denote by $\Isom(\Hy^n)$ the group of isometries of $\Hy^n$. 
One way to construct a lattice in $\Isom(\Hy^n)$ is to
consider the automorphism group $\Ort(L)$ of a Lorentzian lattice
$L \subset \R^{n,1}$. Of particular
interest are the unimodular Lorentzian lattices. There exist two such
types of lattices: the odd unimodular Lorentzian lattice $\I_{n,1}$ and the even unimodular
Lorentzian lattice $\II_{n,1}$. Their study
appears in connection with the study of Euclidean lattices, as shown in
the book of Conway and Sloane \cite{Conway-Sloane}. While $\I_{n,1}$ exists for every
dimension $n$, the even lattice $\II_{n,1}$ exists only when $n \equiv 1
\mod 8$.

In
\cite{BelEme} (see also \cite{EmePhD}) the following theorem was proved.

\setcounter{theorem}{-1}

\begin{theorem}
	\label{thm:smallest-volumes}
  For each odd dimension $n = 2r-1 \ge 5$, there is  a unique orientable
  non-compact arithmetic hyperbolic $n$-orbifold $\Delta_n \bs \Hy^n$ of the smallest
  volume (with $\Delta_n$ an arithmetic lattice of
  $\Isom(\Hy^n)$). Its volume is given by:
  \begin{align}
      \frac{1}{2^{r-2}} \; \zeta(r) \; \prod_{j=1}^{r-1} \frac{(2j -1)!}{(2
      \pi)^{2j}} \zeta(2j) &\qquad \mbox{if }  n \equiv 1 \mod 8 ;\label{vol-min-II}\\
       \frac{(2^r  -1)  (2^{r-1}
	-1)}{3   \cdot  2^{r-1}}   \;  \zeta(r)   \;  \prod_{j=1}^{r-1}
      \frac{(2j -1)!}{(2 \pi)^{2j}} \zeta(2j) &\qquad \mbox{if } n
      \equiv 5 \mod 8; \label{vol-min-5}
      \\
      \frac{3^{r-1/2}}{2^{r-1}} \; L_{\Q(\sqrt{-3})|\Q}\!(r) \; \prod_{j=1}^{r-1}
      \frac{(2j -1)!}{(2 \pi)^{2j}} \zeta(2j)
      &\qquad \mbox{if } n \equiv 3 \mod 4. \label{vol-min-7}
  \end{align}
\end{theorem}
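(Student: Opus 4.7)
The plan is to apply Prasad's volume formula to principal arithmetic subgroups of $\Q$-forms of $\SO(n,1)$ and then take the infimum of the resulting covolume over all admissible data (form $f$ of signature $(n,1)$, coherent family of parahorics $\{P_v\}$) and over all maximal lattices containing such a principal subgroup. Godement's compactness criterion, combined with the non-compactness hypothesis, forces the field of definition to be $\Q$ and $f$ to be $\Q$-isotropic. The resulting algebraic group is of absolute type $\tD_r$, either an inner form (when the signed discriminant of $f$ is a square, which a direct calculation shows happens exactly when $r$ is odd, i.e.\ $n \equiv 1 \bmod 4$) or an outer form ${}^2\tD_r$ attached to an imaginary quadratic extension $k_0|\Q$ (when $r$ is even, i.e.\ $n \equiv 3 \bmod 4$).

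First I would write Prasad's formula for the covolume of a principal arithmetic $\Lambda \subset \SO(f)(\R)^\circ$ as a product of three pieces: an Archimedean Euler product $\prod_{j=1}^{r-1} \frac{(2j-1)!}{(2\pi)^{2j}} \zeta(2j)$; a global $L$-value $L(r,\chi)$, which equals $\zeta(r)$ in the inner case and $L_{k_0|\Q}(r)$ in the outer case; and a non-Archimedean contribution involving a power of the discriminant $|d_{k_0}|$ together with a product $\prod_v e(P_v)\ge 1$, where each local factor equals $1$ exactly when $P_v$ is hyperspecial. Minimizing the covolume therefore reduces to choosing $f$ (and $k_0$ in the outer case) together with the $P_v$ so that this last piece is as small as possible, subject to the coherence constraint that the $P_v$ come from a single global form.

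The trichotomy by $n \bmod 8$ reflects which inner $\Q$-forms admit hyperspecial parahorics at every finite place. When $n \equiv 1 \bmod 8$, the existence of $\II_{n,1}$ furnishes an inner form that is locally split everywhere; every $e(P_v)$ can be taken equal to $1$, and \eqref{vol-min-II} follows. When $n \equiv 5 \bmod 8$ the form is still inner but no everywhere-split representative exists; the Hasse invariant must be nontrivial at some prime, and one verifies that the minimum is realised by concentrating the defect at $2$, with a non-hyperspecial maximal parahoric whose index inside a hyperspecial one contributes precisely the rational factor $(2^r-1)(2^{r-1}-1)/3$. When $n \equiv 3 \bmod 4$ the form is outer, and the minimum is attained by choosing $k_0$ to have the smallest possible discriminant among imaginary quadratic fields, namely $k_0 = \Q(\sqrt{-3})$ with $|d_{k_0}| = 3$; the Prasad exponent $(2r-1)/2$ on $|d_{k_0}|$ produces exactly the factor $3^{r-1/2}$, together with $L_{\Q(\sqrt{-3})|\Q}(r)$, and after optimizing the parahorics leaves only $2^{r-1}$ in the denominator of \eqref{vol-min-7}.

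Finally I would pass from principal to maximal arithmetic lattices using Borel's description: each maximal lattice normalizes a unique principal one, the quotient is controlled by Rohlfs's formula in terms of Galois cohomology of the centre, and the orientation-preserving subgroup $\Delta_n$ is cut out by a further index-two character. In each of the three regimes these indices combine with the principal covolume to yield the stated closed form and, crucially, to establish the uniqueness of the minimizer. The main obstacle is the combinatorial optimization over the local data: one has to rule out, prime by prime (and, in the outer case, over all possible choices of $k_0$), that some other assignment of parahorics could produce a smaller global product than the prescribed configurations at $2$ for \eqref{vol-min-5} and at $3$ for \eqref{vol-min-7}, and to check that the orientability and maximality constraints do not forbid the simultaneous attainment of this optimum at every place.
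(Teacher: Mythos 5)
This theorem is not proved in the present paper at all---it is imported from \cite{BelEme}---and your outline (Prasad's volume formula for principal arithmetic subgroups of $\Q$-forms of type $\mathrm{D}_r$, Godement's criterion forcing $k=\Q$ and isotropy, the inner/outer dichotomy via the signed discriminant giving the splitting at $n\bmod 4$ and the finer $n\bmod 8$ trichotomy via local splitting, minimization of the local factors $e(P_v)$ with the defect concentrated at $p=2$ resp.\ the choice $k_0=\Q(\sqrt{-3})$, and the passage to maximal lattices via Borel--Prasad and Rohlfs) is precisely the strategy of that reference. The steps you flag as the main obstacles---the prime-by-prime optimization of the parahoric data and the uniqueness of the minimizer---are indeed where the real work in \cite{BelEme} lies, so your proposal is a faithful roadmap of the same proof rather than an alternative to it.
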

\medskip

It is remarkable that the smallest volume has the simplest
form~\eqref{vol-min-II} exactly for the dimensions $n$ where the even
unimodular Lorentzian  lattice $\II_{n,1}$ exists.  The main purpose of this article is to show that
for these $n$ the arithmetic group $\Delta_n$ of
Theorem~\ref{thm:smallest-volumes} is actually given by the group
$\SO(\II_{n,1})$ of special automorphisms of $\II_{n,1}$ (cf.~Theorem~\ref{thm:identification}).  
In particular, this allows to deduce in
Corollary~\ref{cor:volume-aut-even-unimod} the hyperbolic covolume of
the automorphism group $\Ort(\II_{n,1})$. This complements the work of
Ratcliffe and Tschantz \cite{RatTsch97}, where the covolume of
$\Ort(\I_{n,1})$ was determined for every $n$.

In \S\ref{sec:volume-Coxeter}--\ref{sec:disussion-cusps} we discuss some
interesting consequences of our main result. Finally, in
\S\ref{sec:odd-unimod-sqrt3} we discuss the case of
Formulas~\eqref{vol-min-5}--\eqref{vol-min-7}. In particular, we state
in Proposition~\ref{prop:odd-unimod-index-3} the exact relation between
$\Delta_n$ and $\Ort(\I_{n,1})$ when $n \equiv 5 \mod 8$.

\subsection*{Acknowledgements}

I would like to thank Curtis McMullen for the interesting discussions
that are at the origin of this article. I thank Jiu-Kang Yu for his help
concerning Bruhat-Tits theory, Steve Tschantz for the numerical
computation mentioned in \S \ref{sec:volume-Coxeter}, Anna Felikson and
Pavel Tumarkin for helpful discussions, and Ruth
Kellerhals, John Ratcliffe and the referee for helpful comments. I am thankful
to the MPIM in Bonn for the hospitality and the financial support.

\section{Main result and its proof}
\label{sec:identification}

For $n \equiv 1 \mod 8$, we consider the even unimodular lattice
$\II_{n,1}$ embedded in the real quadratic space equipped with the
standard rational quadratic form:
\begin{align}
	q(x) &=	-x_0^2 + x_1^2 + \cdots + x_n^2. 
	\label{eq:quad-form-stand}
\end{align}
The group of automorphisms of this quadratic space acts then
isometrically on $\Hy^n$, via an identification of $\Hy^n$ with its
projective model. The group $\Ort(\II_{n,1})$
(resp. $\SO(\II_{n,1})$) of  automorphisms (resp. special automorphisms)
preserving $q$ and the lattice $\II_{n,1}$  acts discontinuously  on
$\Hy^n$. More precisely, the group $\PO(\II_{n,1}) =
\Ort(\II_{n,1})/\{\pm I \}$ (resp. $\PSO(\II_{n,1}) =
\SO(\II_{n,1})/\{\pm I \}$), where $I$ is the identity matrix, can be
seen as a discrete subgroup of $\Isom(\Hy^n)$.

\begin{theorem}
	\label{thm:identification}
	For $n \equiv 1 \mod 8$, the group $\Delta_n$ is conjugate in
	$\Isom(\Hy^n)$ to $\PSO(\II_{n,1})$.
\end{theorem}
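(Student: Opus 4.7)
The strategy is to invoke the uniqueness in Theorem~\ref{thm:smallest-volumes}: once we verify that $\PSO(\II_{n,1})$ is an orientable, non-cocompact, arithmetic lattice in $\Isom(\Hy^n)$ whose covolume equals the expression~\eqref{vol-min-II}, it must be conjugate to $\Delta_n$.

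First I would set up $\PSO(\II_{n,1})$ as an arithmetic lattice in the adjoint $\Q$-group $\bG = \PSO(q)$, which for $n = 2r-1$ is an inner form of type $\tD_r$, split at every finite place and of real rank one. Non-cocompactness is immediate from the $\Q$-isotropy of $q$. The orientability amounts to $\PSO(\II_{n,1}) \subset \Isom^+(\Hy^n)$: this holds because $n$ is odd, so $-I \in \SO(q)$ reverses the forward light cone, and the quotient $\SO(\II_{n,1})/\{\pm I\}$ is therefore canonically identified with $\SO^+(\II_{n,1}) \subset \Isom^+(\Hy^n)$.

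The heart of the proof is the covolume computation via Prasad's volume formula, applied as in \cite{BelEme}. This expresses $\vol(\PSO(\II_{n,1}) \bs \Hy^n)$ as the product of a global Euler factor (of exactly the $\zeta$- and $\Gamma$-shape visible in~\eqref{vol-min-II}) and local factors determined by the parahoric subgroups $P_p \subset \bG(\Q_p)$ corresponding to the stabilizers of $\II_{n,1} \otimes \Z_p$. The key claim is that for every finite prime $p$, including $p = 2$, the parahoric $P_p$ is hyperspecial. At odd $p$ this is standard: $\II_{n,1} \otimes \Z_p$ is a unimodular $\Z_p$-lattice in a split quadratic space, and the stabilizer of such a lattice is a hyperspecial parahoric. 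At $p=2$ the claim uses crucially that $\II_{n,1}$ is \emph{even} unimodular: by Bruhat-Tits theory, the stabilizer in $\SO$ of an even unimodular quadratic $\Z_2$-lattice is a hyperspecial parahoric, corresponding to the smooth reductive Chevalley $\Z_2$-model of $\SO_{2r}$. This contrasts sharply with $\I_{n,1}$, whose $2$-adic stabilizer is only a non-hyperspecial special parahoric, and is precisely what makes the formula~\eqref{vol-min-II} simpler than~\eqref{vol-min-5}--\eqref{vol-min-7}.

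I expect the main obstacle to be this hyperspecial property at $p = 2$, which requires a careful identification of the integral structure cut out by an even unimodular $\Z_2$-lattice with the Chevalley $\Z_2$-form of $\SO_{2r}$ (the consultation of Jiu-Kang Yu acknowledged in the introduction strongly suggests this is indeed the delicate step). Once this is secured, the local volumes $\vol(P_p)$ take their minimal possible values, and the calculation in \cite{BelEme} shows that the resulting global covolume coincides exactly with~\eqref{vol-min-II}. A small remaining point is to check, using strong approximation for $\Spin(q)$ together with $h_\Q = 1$, that $\PSO(\II_{n,1})$ equals the full principal arithmetic subgroup of $\bG(\Q)$ attached to the coherent family $(P_p)$, rather than a proper subgroup. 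Combined with the uniqueness in Theorem~\ref{thm:smallest-volumes}, this identifies $\PSO(\II_{n,1})$ with $\Delta_n$ up to conjugation in $\Isom(\Hy^n)$.
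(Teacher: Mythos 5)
Your proposal is correct in outline, but it takes a genuinely different route from the paper. You propose to verify that $\PSO(\II_{n,1})$ is an orientable, non-cocompact arithmetic lattice whose covolume equals \eqref{vol-min-II}, and then to invoke the uniqueness clause of Theorem~\ref{thm:smallest-volumes}. The paper never recomputes a volume: it observes that the coherent collection of stabilizers $K_p \subset \Spin(V_p)$ of the lattices $\II_{n,1}\otimes\Z_p$ --- hyperspecial at every $p$, including $p=2$, precisely because the lattice is even unimodular, as you correctly single out --- is exactly the collection defining the principal arithmetic subgroup $\Lambda_1$ of \cite{BelEme} whose image in $\bG(\R)$ is $\Delta_n$; the conclusion then follows from the maximality of $\Lambda_1$ and from the fact (also proved in \cite{BelEme}) that the construction is independent, up to conjugacy, of the choice of hyperspecial coherent collection. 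The common core of both arguments is thus the local statement about hyperspecial stabilizers, for which the paper cites \cite[\S 5]{BeloGan05} rather than re-deriving it from Chevalley $\Z_2$-models as you suggest. What your route buys is that it does not need the conjugacy-uniqueness of hyperspecial collections; what it costs is a re-run of Prasad's formula, which is exactly the computation already carried out in \cite{BelEme} to prove Theorem~\ref{thm:smallest-volumes}. One simplification is available at your final step: you do not need strong approximation and $h_\Q=1$ to show that $\PSO(\II_{n,1})$ coincides with the image $\overline{\Lambda}$ of the principal arithmetic subgroup. Since $\overline{\Lambda} \subseteq \PSO(\II_{n,1})$ and $\overline{\Lambda}$ already attains the minimal covolume \eqref{vol-min-II}, a proper inclusion would force $\PSO(\II_{n,1})$ to have covolume strictly below the minimum; hence the index is one. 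This disposes of the spinor-norm/normalizer issue --- the genuinely delicate global point, which the paper absorbs into the maximality of $\Lambda_1$ --- without any further argument.
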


\begin{proof}
	We denote by $V$ the quadratic space over $\Q$ equipped with quadratic
	form $\frac{1}{2}q$, where $q$ is given in
	\eqref{eq:quad-form-stand}. 
	Let $\G$ be the algebraic group defined over $\Q$ with
	$\G(\Q) = \Spin(V)$, the group of spinors of $V$.
	Let $\bG$ be the adjoint form of $\G$. Then $\bG(\R)$ is isomorphic
	to $\Isom(\Hy^n)$. For each prime $p$ we consider the quadratic
	space $V_p = V \otimes_\Q \Q_p$, and the Bruhat-Tits building $\Buil_p$
	associated with $\Spin(V_p)$ and $\SO(V_p)$. Note that $\G$ and
	$\bG$ are split over $\Q_p$, for every prime $p$ (cf.~\cite[Prop.~3.9]{BelEme}).

	Let $L$ be the lattice in $V$ that identifies to $\II_{n,1}$ via
	the embedding in the quadratic space $(V \otimes_\Q \R, \;q)$.
	For each prime $p$, we consider the lattice $L_p = L \otimes
	\Z_p$, which is a maximal lattice in $V_p$ \cite[\S 5]{BeloGan05}. 
	Bruhat-Tits theory allows to identify
	the lattice $L_p$ as an hyperspecial point of the building
	$\Buil_p$ (cf. \cite[\S 5]{BeloGan05}), whose stabilizer in
	$\SO(V_p)$ is $\SO(L_p)$.

	Let us denote by $K_p$ the hyperspecial parahoric subgroup of $\Spin(V_p)$ 
	that stabilizes $L_p \in \Buil_p$. The set of all these $K_p$ for $p$ prime is a 
	coherent collection of parahoric subgroups, and this defines a principal
	arithmetic subgroup of $\G(\Q)$ (see \cite[\S 2.2]{BelEme} for details):
	\begin{align}
		\Lambda &= \G(\Q) \cap \prod_p K_p\;,
		\label{eq:princal-arithm}
	\end{align} 
	which by construction maps into $\SO(L) = \SO(\II_{n,1})$.
	But $\Lambda$ corresponds exactly to the group $\Lambda_1$ in
	\cite{BelEme}, whose image in $\bG(\R)$ gives the group
	$\Delta_n$. It was proved in \cite{BelEme} that $\Lambda_1$
	is maximal, and that up to conjugacy its construction does not
	depend on the choice of a coherent collection of hyperspecial
	subgroups. It follows that $\PSO(\II_{n,1})$ is conjugate to
	$\Delta_n$ in $\Isom(\Hy^n)$.
\end{proof}

From Theorem~\ref{thm:smallest-volumes}  and~\ref{thm:identification} we obtain  the covolume of the group
$\PO(\II_{n,1})$, which contains $\PSO(\II_{n,1})$ as a subgroup of index two. 
In order to simplify even more the volume formula, we
use the well-known expression of $\zeta(2j)$ in terms of the Bernoulli number
$B_{2j}$.

\begin{cor}
	The covolume of the action of $\PO(\II_{n,1})$ on $\Hy^n$ equals
  \begin{align}
	  \zeta(r) \prod_{j=1}^{r-1} \frac{\vert B_{2j}\vert}{8j} \;,
	  \label{vol-aut-even-unimod}
  \end{align}
	where $B_k$ is the $k$-th Bernoulli number. 
	\label{cor:volume-aut-even-unimod}
\end{cor}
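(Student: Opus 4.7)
The plan is to combine Theorems~\ref{thm:smallest-volumes} and~\ref{thm:identification} with Euler's classical formula for $\zeta(2j)$. By Theorem~\ref{thm:identification}, the covolume of $\PSO(\II_{n,1})$ acting on $\Hy^n$ coincides with $\vol(\Delta_n \bs \Hy^n)$, which for $n \equiv 1 \mod 8$ is given by the expression~\eqref{vol-min-II}. Since the statement of the corollary tells us that $\PSO(\II_{n,1})$ sits in $\PO(\II_{n,1})$ as a subgroup of index two, I would divide~\eqref{vol-min-II} by $2$ to get
\begin{equation*}
\vol(\PO(\II_{n,1}) \bs \Hy^n) \;=\; \frac{1}{2^{r-1}} \, \zeta(r) \, \prod_{j=1}^{r-1} \frac{(2j-1)!}{(2\pi)^{2j}} \, \zeta(2j).
\end{equation*}

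Next, I would invoke Euler's identity
\begin{equation*}
\zeta(2j) \;=\; \frac{(2\pi)^{2j} \, \vert B_{2j}\vert}{2 \, (2j)!},
\end{equation*}
which collapses each factor inside the product to $\frac{(2j-1)!\,\vert B_{2j}\vert}{2\,(2j)!} = \frac{\vert B_{2j}\vert}{4j}$. Distributing the global prefactor $1/2^{r-1}$ as one factor of $1/2$ per term of the $(r-1)$-fold product then yields $\zeta(r)\,\prod_{j=1}^{r-1}\frac{\vert B_{2j}\vert}{8j}$, which is exactly~\eqref{vol-aut-even-unimod}.

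There is essentially no obstacle here: the corollary is pure bookkeeping from the two preceding theorems together with a classical identity. The only mild point worth spelling out, if one wishes to be pedantic about the index-two assertion, is that $\{\pm I\} \subset \SO(\II_{n,1})$ (because $n+1$ is even, so $\det(-I) = 1$), which guarantees that $[\PO(\II_{n,1}):\PSO(\II_{n,1})]$ equals $[\Ort(\II_{n,1}):\SO(\II_{n,1})] = 2$ rather than collapsing to~$1$.
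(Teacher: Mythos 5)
Your proposal is correct and follows exactly the paper's route: identify the covolume of $\PSO(\II_{n,1})$ with \eqref{vol-min-II} via Theorem~\ref{thm:identification}, halve it because $[\PO(\II_{n,1}):\PSO(\II_{n,1})]=2$, and apply Euler's formula $\zeta(2j) = (2\pi)^{2j}\vert B_{2j}\vert/(2\,(2j)!)$ to each factor. Your extra observation that $-I \in \SO(\II_{n,1})$ because $n+1$ is even, which justifies the index-two claim, is a correct and welcome detail the paper leaves implicit.
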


\section{Volume of Coxeter polytopes}
\label{sec:volume-Coxeter}

Corollary~\ref{cor:volume-aut-even-unimod} was already known in dimension
$n=9$ (see~\S \ref{sec:odd-unimod-sqrt3}), where $\PO(\II_{9,1})$
is the Coxeter group generated by reflections through the faces of a
simplex. The only other group $\PO(\II_{n,1})$ that is reflective is
$\PO(\II_{17,1})$, as it follows form the work of Conway and Vinberg
(cf. \cite[Ch.~27]{Conway-Sloane} and \cite[Part II Ch.~6 \S
2.1]{Vinb93}). It contains as a subgroup of index two the following
Coxeter group:
\begin{equation}
	\label{eq:Coxeter-17}
	\begin{split}
	\xymatrix@C=14pt@R=12pt{
	& & *={\bullet} \ar@{-}[d]  & & & & & & & & & & & & *={\bullet} \ar@{-}[d] & &  \\
	*={\bullet} \ar@{-}[r]&
	*={\bullet} \ar@{-}[r]&
	*={\bullet} \ar@{-}[r]&
	*={\bullet} \ar@{-}[r]&
	*={\bullet} \ar@{-}[r]&
	*={\bullet} \ar@{-}[r]&
	*={\bullet} \ar@{-}[r]&
	*={\bullet} \ar@{-}[r]&
	*={\bullet} \ar@{-}[r]&
	*={\bullet} \ar@{-}[r]&
	*={\bullet} \ar@{-}[r]&
	*={\bullet} \ar@{-}[r]&
	*={\bullet} \ar@{-}[r]&
	*={\bullet} \ar@{-}[r]&
	*={\bullet} \ar@{-}[r]&
	*={\bullet} \ar@{-}[r]& *={\bullet}
				}
	\end{split}
\end{equation}

\vspace{10pt}

\begin{cor}
	\label{cor:volume-Coxeter-gp}
	Let $P \subset \Hy^{17}$ be a Coxeter polytope corresponding to
	the diagram~\eqref{eq:Coxeter-17}. Then
	\begin{align*}
		\vol(P) &= \frac{691 \cdot 3617}{2^{38}
		\cdot 3^{10} \cdot 5^4 \cdot 7^2 \cdot 11 \cdot 13 \cdot
		17} \; \zeta(9)
		\\
		&\approx  2.072451981 \cdot 10^{-18}.
	\end{align*}
\end{cor}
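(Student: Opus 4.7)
The plan is to derive this corollary as a direct consequence of Corollary~\ref{cor:volume-aut-even-unimod}, using the index-two relation stated just above: the Coxeter group $W$ whose Coxeter diagram is~\eqref{eq:Coxeter-17} sits as a subgroup of index two in $\PO(\II_{17,1})$ (by the Conway--Vinberg classification cited there).

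Since $P$ is by definition a fundamental domain for $W$ acting on $\Hy^{17}$, my first step would be to write
$$
\vol(P) \;=\; [\PO(\II_{17,1}):W] \cdot \mathrm{covol}\bigl(\PO(\II_{17,1})\bigr) \;=\; 2 \cdot \mathrm{covol}\bigl(\PO(\II_{17,1})\bigr).
$$
Specializing formula~\eqref{vol-aut-even-unimod} to $n=17$, hence $r=9$, then gives
$$
\vol(P) \;=\; 2\,\zeta(9) \prod_{j=1}^{8} \frac{\vert B_{2j}\vert}{8j}.
$$

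The remaining task is purely arithmetical: substitute the classical values $B_2=\tfrac16$, $B_4=-\tfrac1{30}$, $B_6=\tfrac1{42}$, $B_8=-\tfrac1{30}$, $B_{10}=\tfrac5{66}$, $B_{12}=-\tfrac{691}{2730}$, $B_{14}=\tfrac76$, $B_{16}=-\tfrac{3617}{510}$ and collect prime factors. Only $B_{12}$ and $B_{16}$ contribute large primes, yielding $691 \cdot 3617$ in the numerator; tallying the contributions of the factors $8j$ and of the denominators of $B_{2j}$ gives exactly $2^{39}\cdot 3^{10}\cdot 5^4\cdot 7^2\cdot 11\cdot 13\cdot 17$, which after multiplication by the outer factor of $2$ produces the claimed denominator $2^{38}\cdot 3^{10}\cdot 5^4 \cdot 7^2 \cdot 11 \cdot 13 \cdot 17$. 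The numerical approximation then follows immediately from $\zeta(9) \approx 1.002008393$.

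I do not foresee any genuine obstacle: all the deep input (Prasad's volume formula via Theorem~\ref{thm:smallest-volumes}, the identification Theorem~\ref{thm:identification}, and the reflectivity result of Conway--Vinberg establishing the index-two inclusion) is already in place. The only care needed is bookkeeping in the prime factorization, which is easily verified prime-by-prime.
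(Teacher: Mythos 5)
Your proposal is correct and follows exactly the route the paper intends: the corollary is an immediate consequence of Corollary~\ref{cor:volume-aut-even-unimod} with $r=9$ together with the index-two inclusion of the Coxeter group in $\PO(\II_{17,1})$ from the Conway--Vinberg reflectivity result, followed by the Bernoulli-number bookkeeping. Your prime-by-prime tally (yielding $2^{39}$ before cancelling one factor of $2$ against the index) checks out and reproduces the stated value.
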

It is not clear how one could compute precisely the volume of such an
high- and odd-dimensional hyperbolic polytope without an identification
with the fundamental domain of an arithmetic group. Steve Tschantz was 
able to compute the following numerical approximation, which agrees with
the result of Corollary~\ref{cor:volume-Coxeter-gp}.
\begin{align}
	\vol(P) &= 2.069 \cdot 10^{-18} \; \pm \; 2.4 \cdot 10^{-20}.
	\label{eq:Tschantz}
\end{align}
The computation took about 60 hours, showing that for this kind  of 
polytopes even numerical computation  is not an easy task. 

It is most likely that $P$ realizes the smallest volume among all
hyperbolic Coxeter polytopes (non-compact or not), independently of the
dimension. The results of \cite{EmePhD,BelEme} (odd dimensions) and
\cite{Belo04} (even dimensions) determine the smallest possible
arithmetic orientable hyperbolic orbifolds. From them we see that a
Coxeter polytope smaller than $P$ and being the fundamental domain of an
arithmetic group must necessarily lie in $\Hy^{17}$, be commensurable to
$P$, and have exactly half of the volume of $P$. We don't know if such a
Coxeter polytope could exist. 

The small size of $P$ can also be
explained by Schl\"afli differential formula for the volume of polytopes
(see \cite[Part~I Ch.~7 \S 2.2]{Vinb93}).
According to this formula, the volume of Coxeter polytopes tends to be
smaller for polytopes having large dihedral angles. The small size of $P$ 
results then from the combination of
two factors: the only dihedral angles in $P$ are $\pi/2$ and
$\pi/3$; and relatively to its dimension, $P$ is determined by few
hyperplanes (actually the smallest possible number in $\Hy^{17}$).
These two conditions are a very rare occurrence in high dimensions.

\medskip

\section{Comparison with the mass formula}
\label{sec:disussion-cusps}

The lattice $\II_{25,1}$ plays an important role in connection with the
study of even unimodular Euclidean lattices in dimension $24$
(see~\cite[Theorem~5, Ch.~26]{Conway-Sloane}). For $n \equiv 1 \mod 8$,
let $\Lat_{n-1}$ denotes the set (up to isomorphism) of
$(n-1)$-dimensional even unimodular Euclidean lattices. This is a finite
set, and an important invariant is its \emph{mass}, defined as
\begin{align}
	\mass(\Lat_{n-1}) &=  \sum_{L \in \Lat_{n-1}} \frac{1}{\vert
	\Ort(L) \vert}\;.
	\label{eq:mass-definition}
\end{align}
For $n=9, 17$ and $25$ each group $\Ort(L)$ (with $L \in
\Lat_{n-1})$ appears as a subgroup of $\Ort(\II_{n,1})$ as the
stabilizer of a point at infinity of $\Hy^n$. Therefore, the groups
$\Ort(L)$ correspond to cusps of the hyperbolic orbifold defined by
$\Ort(\II_{n,1})$, and $\mass(\Lat_{n-1})$ could be regarded as a
measurement of the contribution from these cusps to the volume. It is
then quite natural to consider the ratio ``covolume of
$\Ort(\II_{n,1})$ divided by $\mass(\Lat_{n-1})$''. From the mass
formula~\cite[Theorem~2, Ch.16]{Conway-Sloane} we obtain the rather
simple formula:
\begin{align}
	\frac{\mbox{covolume of } \Ort(\II_{n,1})}{\mass(\Lat_{n-1})} &=
	2^{-r}
	\frac{\vert B_{2r-2} \vert}{\vert B_{r-1} \vert}
	\zeta(r).
	\label{eq:ratio-volume-mass}
\end{align}
Note that this ratio goes quickly to $\infty$ when $r$ grows. 

We refer
to \cite{StoverEnds} for more precise results on the behaviour of cusps of arithmetic
orbifolds with respect to the dimension.


\medskip

\section{The case of the other odd dimensions}
\label{sec:odd-unimod-sqrt3}

The covolume of $\PO(\I_{n,1})$ was computed by
Ratcliffe and Tschantz in all dimensions $n > 1$ \cite{RatTsch97}.
They obtain the result by evaluating a formula due to Siegel. Note that Prasad's
volume formula, the main ingredient to obtain
Theorem~\ref{thm:smallest-volumes}, may be considered as a far-reaching
extension of this formula of Siegel. Using the fact that
$\PO(\II_{n,1})$ and $\PO(\I_{n,1})$ are commensurable, Ratcliffe and
Tschantz could also deduce the covolume of $\PO(\II_{9,1})$ (cf.
\cite[p. 345]{Johnson-al-99}). By the work
of Vinberg and Kaplinskaya, the group $\PO(\I_{n,1})$ is known to be reflective for $n \le 19$, and combining
this fact with the work of Ratcliffe and Tschantz one can obtain the volume
of several Coxeter polytopes. 

By its construction in~\cite{BelEme}, it is clear that for $n \equiv 5
\mod 8$ the arithmetic group $\Delta_n$ of
Theorem~\ref{thm:smallest-volumes} is commensurable to
$\PSO(\I_{n,1})$. Moreover, we can see that the
ratio of the covolumes of these two groups is equal to $3$. In fact, using
\cite[Prop.~5.9]{BeloGan05} and the same kind of argument as in the proof
of Theorem~\ref{thm:identification}, we get the following result.
It agrees with known facts about simplices in dimension $5$
(cf.~\cite[\S 5]{Johnson-al-99}). 

\begin{prop}
	\label{prop:odd-unimod-index-3}
	For $n \equiv 5 \mod 8$, the group $\PSO(\I_{n,1})$ is conjugate in $\Isom(\Hy^n)$ to a subgroup of index
	$3$ in $\Delta_n$.
\end{prop}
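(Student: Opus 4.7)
The plan is to mirror the proof of Theorem~\ref{thm:identification}, replacing the even lattice $\II_{n,1}$ by the odd lattice $\I_{n,1}$ and tracking the one new phenomenon that appears at the prime $p = 2$. Keeping the notation $V$, $\G = \Spin(V)$, $\bG$ from that proof (now with $n \equiv 5 \mod 8$), I would take $L = \I_{n,1} \subset V$ and, for each prime $p$, let $K_p \subset \Spin(V_p)$ denote the stabilizer of $L_p = L \otimes_\Z \Z_p$.

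For every odd prime $p$, since $2$ is a unit in $\Z_p$, the lattice $L_p$ is maximal in $V_p$ for the form $\frac{1}{2}q$, and exactly the same Bruhat--Tits identification as in Theorem~\ref{thm:identification} shows that $K_p$ is a hyperspecial parahoric of $\Spin(V_p)$. The new phenomenon occurs at $p = 2$: because $\I_{n,1}$ is odd, $L_2$ is no longer a maximal lattice in $V_2$ for the form $\frac{1}{2}q$, and $K_2$ fails to be hyperspecial. By~\cite[Prop.~5.9]{BeloGan05}, there is a hyperspecial parahoric $K_2' \subset \Spin(V_2)$ containing $K_2$ with $[K_2' : K_2] = 3$.

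Having this local picture, I would form the two principal arithmetic subgroups of $\G(\Q)$,
\[
  \Lambda = \G(\Q) \cap K_2 \prod_{p \neq 2} K_p,
  \qquad
  \Lambda' = \G(\Q) \cap K_2' \prod_{p \neq 2} K_p,
\]
so that $\Lambda \subset \Lambda'$ and, by strong approximation for the simply connected group $\G$, $[\Lambda' : \Lambda] = [K_2' : K_2] = 3$. As in the proof of Theorem~\ref{thm:identification}, $\Lambda$ maps into $\SO(L) = \SO(\I_{n,1})$ and its image in $\bG(\R) \cong \Isom(\Hy^n)$ is $\PSO(\I_{n,1})$. The family $(K_p')_p$, with $K_p' = K_p$ for odd $p$, is a coherent collection of hyperspecial parahorics, so $\Lambda'$ is of the type classified in~\cite{BelEme}; by the uniqueness up to conjugacy of such collections and the characterization of $\Delta_n$ as the unique minimal-covolume orbifold in Theorem~\ref{thm:smallest-volumes}, the image of $\Lambda'$ in $\bG(\R)$ is conjugate to $\Delta_n$. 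The proposition then follows by passing to the quotient.

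The main obstacle is the local Bruhat--Tits computation at $p = 2$: one must verify that the stabilizer of $L_2$ sits inside a hyperspecial parahoric of $\Spin(V_2)$ with index exactly $3$ (rather than, for instance, as an incomparable maximal parahoric of some different volume). This is a specific calculation for the split group of type $\tD_r$ over $\Q_2$ with $r \equiv 3 \mod 4$, and it is precisely what~\cite[Prop.~5.9]{BeloGan05} delivers in the odd unimodular case.
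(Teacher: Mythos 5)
Your strategy is the one the paper intends (the paper itself only sketches this proof, deferring to \cite[Prop.~5.9]{BeloGan05} and ``the same kind of argument as in the proof of Theorem~\ref{thm:identification}''), and the global part of your argument --- strong approximation transferring the local index $[K_2':K_2]=3$ to $[\Lambda':\Lambda]=3$, and the comparison of images in $\bG(\R)$ --- is sound. But the local analysis at $p=2$, which you rightly single out as the crux, rests on a false premise. For $n\equiv 5\bmod 8$ the group $\Spin(V_2)$ is \emph{not} split over $\Q_2$: writing $q\cong\langle -1,1\rangle\perp\langle 1\rangle^{\perp(n-1)}$ with $n-1\equiv 4\bmod 8$, and using that $\langle 1,1,1,1\rangle$ is the anisotropic norm form of the quaternion division algebra over $\Q_2$ while $\langle 1\rangle^{\perp 8}$ is hyperbolic over $\Q_2$, one finds that $q$ has Witt index $r-2$ over $\Q_2$ with a four-dimensional anisotropic kernel. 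Hence $\Spin(V_2)$ is not even quasi-split over $\Q_2$ and possesses \emph{no} hyperspecial parahoric subgroups at all. (The remark ``$\G$ is split over every $\Q_p$'' in the proof of Theorem~\ref{thm:identification} is specific to $n\equiv 1\bmod 8$; the factor $(2^r-1)(2^{r-1}-1)/3$ in \eqref{vol-min-5} is exactly the local correction factor of a non-hyperspecial maximal parahoric with residue field of order $2$ in Prasad's formula --- if a coherent hyperspecial collection existed, the minimal covolume would have the shape of \eqref{vol-min-II}.) Consequently there is no hyperspecial $K_2'\supset K_2$, your ``coherent collection of hyperspecial parahorics'' does not exist, and the uniqueness-up-to-conjugacy statement you invoke, which concerns hyperspecial collections, does not apply here.

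The repair stays close to your outline: $K_2'$ must be the \emph{non-hyperspecial} maximal parahoric of $\Spin(V_2)$ that enters the definition of $\Lambda_1$ in \cite{BelEme} for $n\equiv 5\bmod 8$ (the one realizing the minimal local factor), and \cite[Prop.~5.9]{BeloGan05} is precisely what identifies the stabilizer $K_2$ of $L_2$ as a subgroup of index $3$ of such a $K_2'$. The conjugacy of $\Lambda'$ with the preimage of $\Delta_n$ then follows from the corresponding conjugacy statement in \cite{BelEme} for these mixed coherent collections (hyperspecial away from $2$, the distinguished maximal parahoric at $2$). With these corrections your argument goes through and coincides with what the paper has in mind.
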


\medskip
For $n \equiv 3 \mod 4$, the group $\Delta_n$ is not commensurable to
$\PSO(\I_{n,1})$. Instead, it is commensurable to the
group $\PO(f,\Z)$ given by the  integral automorphisms of the following 
quadratic form:
\begin{align}
	f &=	-3 x_0^2 + x_1^2 + \cdots + x_n^2.	
	\label{eq:quadr-form-3}
\end{align}
McLeod showed that the group $\PO(f,\Z)$ is reflective when $n
\le 13$ \cite{McLeod11}. Recently, elaborating on their earlier work 
on Siegel's formula (cf.~\cite[pp. 344--345]{Johnson-al-99}), Ratcliffe and Tschantz  
determined the covolume of $\PO(f,\Z)$ (thus obtaining the covolumes of
McLeod's polytopes) \cite{RatTsch12}. For $n \equiv 3 \mod 4$, the ratio between the covolumes of
$\PO(f,\Z)$ and $\Delta_n$ is then computed to be equal to $a(n)/4$,
where $a(n)$ is some odd integer tending to $\infty$ when $n \to \infty$
(see \cite[(35)]{RatTsch12}). An alternative way to obtain the covolume
of McLeod's polytopes would be to determine the relation between
$\PO(f,\Z)$ and $\Delta_n$ in terms of subgroup inclusions, using the
same kind of arguments as for Theorem~\ref{thm:identification} and
Proposition~\ref{prop:odd-unimod-index-3}.

\bibliographystyle{amsplain}
\bibliography{unimod-lattices}

\end{document}